\documentclass[12pt,a4paper]{article}
\usepackage[utf8]{inputenc}
\usepackage[T1]{fontenc}
\usepackage{amsmath}
\usepackage{amssymb}
\usepackage{amsthm}
\usepackage{graphicx}
\usepackage{mathrsfs}
\usepackage{float}

\newtheorem{example}{Example}[section]

\newtheorem{theorem}{Theorem}[section]
\newtheorem{lemma}{Lemma}[section]

\title{ Finite groups whose commuting graphs are line graphs}	 
	\author {Siddharth Malviy\footnote{First author, Email:  malviysiddharth@gmail.com} ~and Vipul Kakkar\footnote{Corresponding author, Email: vplkakkar@gmail.com }\\ Department of Mathematics \\
		Central University of Rajasthan \\Ajmer, India}

\date{}
\begin{document}
	\maketitle
	\noindent \textbf{{Abstract.}}
	The commuting graph ${\Gamma(G)}$ of a group $G$ is the simple undirected graph with group elements as a vertex set and two elements $x$ and $y$ are adjacent if and only if $xy=yx$ in $G$. By eliminating the identity element of $G$ and all the dominant vertices of $\Gamma(G)$, the resulting subgraphs of $\Gamma(G)$ are $\Gamma^*(G)$ and $\Gamma^{**}(G)$, respectively. In this paper, we classify all the finite groups $G$ such that the graph  $\Delta(G) \in \{\Gamma(G), \Gamma^*(G), \Gamma^{**}(G)\}$ is the line graph of some graph. We also classify all the finite groups $G$ whose graph  $\Delta(G) \in \{\Gamma(G), \Gamma^*(G), \Gamma^{**}(G)\}$ is the complement of line graph. \\
	
	\noindent \textbf{{Keywords.}}  Commuting graph, line graph, complement of line graph, finite groups \\
	
	\noindent \textbf{2020 MSC.} 05C25, 05C76\\
	
	\section{Introduction}
	
	In the last few decades the interest of people in the study of algebraic objects
	using graph theoretic concepts is growing which is an interesting research
	topic leading to several important results and questions. The study of graphs over many
	algebraic structures is very important as graphs of this type have numerous
	applications (\cite{cjo}, \cite{sat}). The commuting graphs on a group $G$ were introduced by Brauer and Fowler \cite{brf} with vertex set $G\setminus\{e\}$.
	The commuting graphs for different non-abelian groups have been studied by many authors (see \cite{bd}, \cite{fms1} \cite{mmpp}). The graph theoretic properties such as distance detour, metric dimension and resolving polynomial 
	properties of the commuting graph on the dihedral group $D_{n}$ were studied by
	Faisal \textit{et al.} \cite{fms1}. Authors in \cite{sml}, also studied the detour distance properties, resolving polynomial and spectral properties of the commuting graph of non-abelian groups of order $p^4$ with center having $p$ elements. Recently, Carleton et al. \cite{solvable}, studied the commuting graph for A-solvable groups and Ashrafi et al. \cite{Ashrafi}, studied the commuting graph of CA-groups. 
	
	The line graph $L(\Gamma)$ of graph $\Gamma$ is the graph whose vertex set consists of all edges of $\Gamma$; two vertices of $L(\Gamma)$ are adjacent if and only if they are incident in $\Gamma$.
	All finite nilpotent groups whose power graphs and proper power graphs are line graphs were characterized by Bera \cite{bera}. Parveen et al. characterized all finite groups whose enhanced power graphs are line graphs in \cite{jk and dalal}. Furthermore, \cite{jk and dalal} determines all finite nilpotent groups whose proper enhanced power graphs are line graphs of certain graphs. In \cite{manisha}, Manisha et al. characterized all the finite groups whose order supergraph is the line graph. Throughout this paper, $G$ is a finite group and $e$ is the identity element of $G$.\\
	In this paper, we aim to study the line graphs of commuting graph associated to finite groups. By eliminating the identity element of $G$ and all the dominant vertices of $\Gamma(G)$, the resulting subgraphs of $\Gamma(G)$ are $\Gamma^*(G)$ and $\Gamma^{**}(G)$, respectively. We characterize all the finite group $G$ such that $ \Delta(G) \in \{\Gamma(G), \Gamma^*(G), \Gamma^{**}(G) \} $ is a line graph of some graph. Also, we classify all finite groups $G$ such that $\Delta(G) \in \{\Gamma(G), \Gamma^*(G), \Gamma^{**}(G) \} $ is the complement of a line graph.
	
	\section{Preliminaries}
	The vertex set $V(\Gamma)$ and the edge set $E(\Gamma)\subseteq V(\Gamma)\times V(\Gamma)$ form an ordered pair that constitutes a graph $\Gamma$. If $\{u, v\}\in E(\Gamma)$, then two vertices, $u$ and $v$ are adjacent; if so, we denote them as $u \sim v$ and if not, $u \nsim v$. When a pair of edges $e_1$ and $e_2$ have a similar endpoint, then they are referred to as {incident edges}. If a graph has no loops or multiple edges, it is referred to as a {simple graph}. In this study, we just take into consideration simple graphs.  A graph $\Gamma'$ such that $V(\Gamma')\subseteq V(\Gamma)$ and $E(\Gamma')\subseteq E(\Gamma)$ is called a subgraph  of a graph $\Gamma$.  
	
	Suppose that $X\subseteq V(\Gamma)$. Then the subgraph $\Gamma'$ induced by the set $X$ is a graph such that $V(\Gamma')=X$ and $u,v\in X$ are adjacent if and only if they are adjacent in $\Gamma$. A vertex $u$ of a graph $\Gamma$ is referred to as a dominating vertex of $\Gamma$ if it is adjacent to every other vertex of $\Gamma$. We refer to the set of all dominating vertices of $\Gamma$ as $\mathrm{Dom}(\Gamma)$. A graph $\Gamma$ is considered complete if every pair of vertices is adjacent to one another. $K_n$ represents a complete graph with $n$ vertices.  The graph $\overline{\Gamma}$ such that $V(\Gamma)= V(\overline{\Gamma})$ and two vertices $u$ and $v$ are adjacent in $\overline{\Gamma}$ if and only if $u$ is not adjacent to $v$ in $\Gamma$ is the complement of a graph $\Gamma$. 
   
	   Throughout this paper, $\mathbb{Z}_n, D_n, S_n, A_n$ and $Q_8$ denotes the cyclic group of order $n$, dihedral group of order $2n$, symmetric group on $n$ symbols, alternating group on $n$ symbols and the quaternion group of order $8$ respectively. The centralizer of an element $x$ in the group $G$ is denoted by $C_G(x)$ and the center of the group $G$ is denoted by $Z(G)$.

	\noindent A characterization of line graph and its complement are described in the next two lemmas, both of which are helpful in the sequel.
	
	\begin{lemma}{\rm \cite{line}}{\label{induced lemma}}
		A graph $\Gamma$ is the line graph of some graph if and only if none of the nine graphs in $\mathrm{Figure \; \ref{figure 1}}$  is an induced subgraph of $\Gamma$.
	\end{lemma}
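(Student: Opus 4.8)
The plan is to establish this classical characterization (due to Beineke) by reducing both directions to Krausz's clique-partition criterion: a graph $\Gamma$ is a line graph if and only if $E(\Gamma)$ admits a partition into complete subgraphs such that every vertex lies in at most two members of the partition. This criterion converts the problem from reconstructing a preimage graph directly into a more tractable combinatorial one about covering edges by cliques.

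For the forward direction I would first record that being a line graph is hereditary under induced subgraphs: if $\Gamma = L(H)$ and $\Gamma'$ is the subgraph induced by an edge set $E' \subseteq E(H)$, then $\Gamma' = L(H')$, where $H'$ is the spanning subgraph of $H$ with edge set $E'$. It therefore suffices to check that none of the nine graphs in Figure \ref{figure 1} is itself a line graph. The prototypical case is the claw $K_{1,3}$: if it equalled $L(H)$, the three mutually non-adjacent leaves would correspond to three pairwise disjoint edges of $H$, each sharing an endpoint with the central edge $e_c=\{a,b\}$; but each such edge must contain $a$ or $b$, and pairwise disjointness forces at most one through $a$ and one through $b$, so at most two of them can exist, a contradiction. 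The remaining eight graphs are eliminated by similar short incidence arguments, each displaying a local configuration that cannot arise from edge-incidences.

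For the converse I would assume $\Gamma$ has none of the nine graphs as an induced subgraph and construct a Krausz partition explicitly. Claw-freeness (exclusion of $K_{1,3}$) is the workhorse, since it forces the neighbourhood of every vertex to be covered by at most two cliques. One then selects a family of maximal cliques covering each edge exactly once while keeping every vertex in at most two selected cliques. The delicate point is the treatment of triangles: a triangle in $\Gamma$ may correspond either to three edges of $H$ through a common vertex (a ``star'' triangle) or to three edges forming a triangle in $H$ (a ``genuine'' triangle), and these two types must be classified consistently across all of $\Gamma$. The eight non-claw forbidden graphs are precisely the obstructions that would otherwise permit an inconsistent assignment, for instance one forcing a vertex into three cliques of the partition.

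The main obstacle is this converse, and within it the global coherence of the triangle classification. Locally the forbidden subgraphs dictate how adjacent triangles and larger cliques may interlock; the real work is showing that these local constraints propagate to a single consistent partition over the whole graph, via a case analysis on how maximal cliques intersect that invokes the absence of each specific forbidden graph to rule out the offending overlaps. Once the partition is secured, reconstructing $H$ (one vertex per clique, together with a pendant vertex for each $\Gamma$-vertex lying in only one clique) and verifying $\Gamma = L(H)$ is routine. Since in this paper the lemma is quoted from \cite{line}, I would in practice cite it rather than reproduce this argument, but the above is the route I would follow to prove it from scratch.
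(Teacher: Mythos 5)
The paper does not prove this lemma at all: it is quoted verbatim from Beineke's 1970 paper \cite{line}, so there is no in-paper argument to compare against. Your outline follows the standard route to Beineke's theorem --- reduce to Krausz's clique-partition criterion, use heredity of line graphs under induced subgraphs to handle the forward direction by checking that none of the nine graphs is itself a line graph, and build a Krausz partition from claw-freeness plus the remaining eight exclusions for the converse --- and this is essentially the argument in the cited source, so your instinct to cite rather than reprove is exactly what the authors did.

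That said, be aware that what you have written is a proof plan, not a proof. The forward direction is genuinely complete only for the claw $K_{1,3}$; ``the remaining eight graphs are eliminated by similar short incidence arguments'' defers the actual checks (each of which is easy but must be done, e.g.\ via Krausz: exhibit that no edge-clique-partition with every vertex in at most two cliques exists). More seriously, the converse --- selecting maximal cliques so that every edge is covered exactly once and every vertex lies in at most two selected cliques, and showing the local triangle classification (``star'' versus ``genuine'') propagates consistently --- is precisely where all the work in Beineke's proof lives, and you only describe the shape of that case analysis rather than carry it out. If the lemma is to be cited, none of this matters; if it were to be proved from scratch, the converse as sketched has a real gap between the stated strategy and a verified construction.
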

	\begin{figure}[H]
		\centering
		\includegraphics[scale=.7]{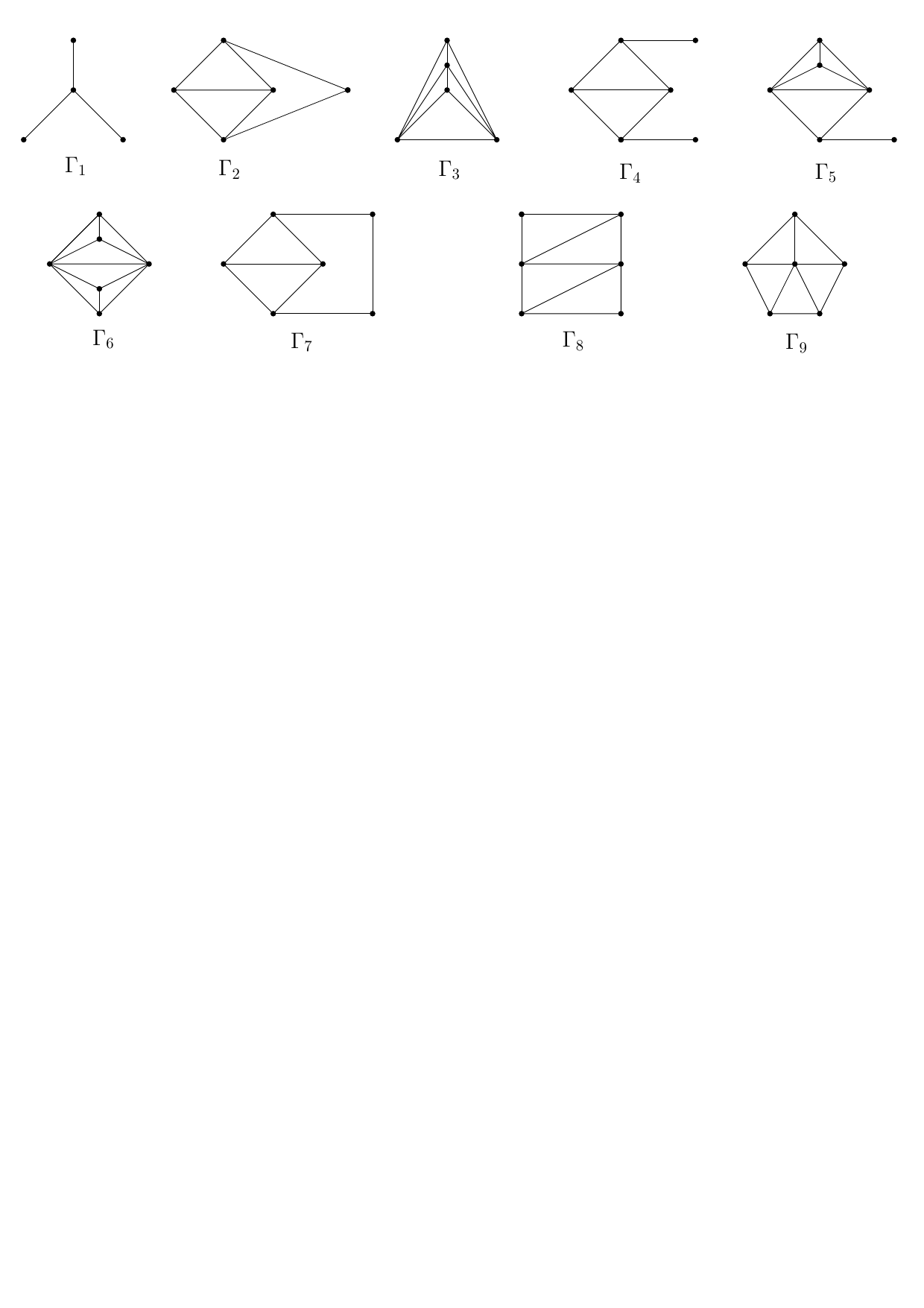}
		\caption{Forbidden induced subgraphs of line graphs.}
		\label{figure 1}
	\end{figure}
	
	\begin{lemma}{\rm \cite[Theorem 3.1]{line complement}} {\label{complement induced lemma}}
		A graph $\Gamma$ is the complement of a line graph if and only if none of the nine graphs $\overline{\Gamma_i}$ in $\mathrm{Figure \; \ref{figure 2}}$ is an induced subgraph of $\Gamma$.
		\begin{figure}[H]
			\centering
			\includegraphics[scale=.7]{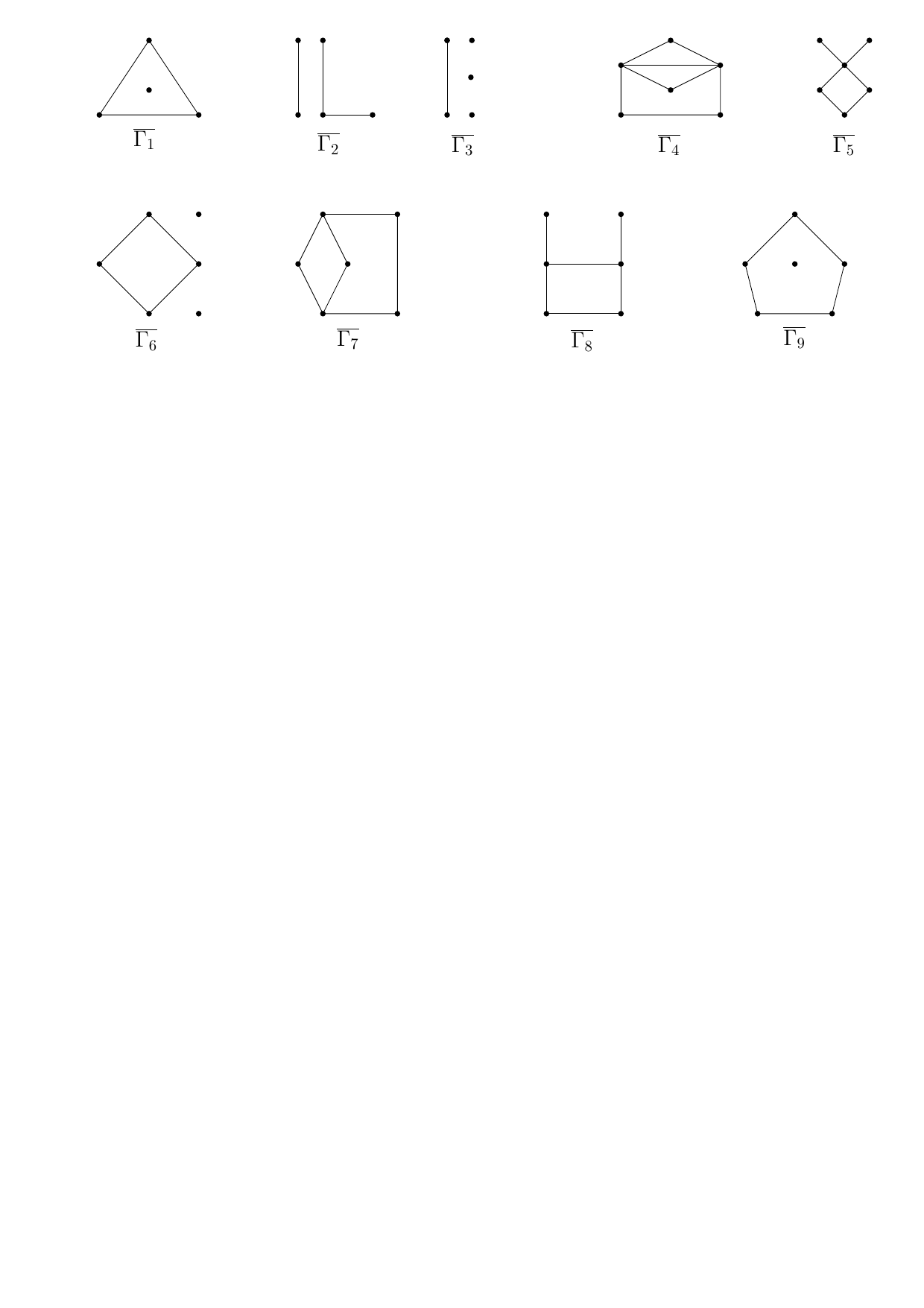}
			\caption{Forbidden induced subgraphs of the complement of line graphs.}
			\label{figure 2}
		\end{figure}
	\end{lemma}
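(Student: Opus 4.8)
The plan is to derive this characterization directly from Lemma~\ref{induced lemma} by passing to complements, so that no independent forbidden-subgraph analysis is needed. By definition, $\Gamma$ is the complement of a line graph precisely when $\overline{\Gamma}$ is itself a line graph. Hence the task reduces to rephrasing the condition of Lemma~\ref{induced lemma}, applied to $\overline{\Gamma}$, as a condition on $\Gamma$ alone.

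First I would record the elementary duality between induced subgraphs and complementation. For a subset $S \subseteq V(\Gamma) = V(\overline{\Gamma})$, the subgraph of $\overline{\Gamma}$ induced by $S$ is exactly the complement of the subgraph of $\Gamma$ induced by $S$; that is, $\overline{\Gamma}[S] = \overline{\Gamma[S]}$. This is immediate from the definitions, since for distinct $u,v \in S$ we have $u \sim v$ in $\overline{\Gamma}[S]$ if and only if $u \nsim v$ in $\Gamma$ if and only if $u \sim v$ in $\overline{\Gamma[S]}$. Consequently, for any fixed graph $H$, the graph $H$ is an induced subgraph of $\overline{\Gamma}$ if and only if $\overline{H}$ is an induced subgraph of $\Gamma$: an induced copy of $H$ on a vertex set $S$ in $\overline{\Gamma}$ corresponds, under the same identification of vertices, to an induced copy of $\overline{H}$ on $S$ in $\Gamma$, and conversely.

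With this in hand I would apply Lemma~\ref{induced lemma} to the graph $\overline{\Gamma}$: it is a line graph if and only if none of the nine graphs $\Gamma_1,\dots,\Gamma_9$ of Figure~\ref{figure 1} occurs as an induced subgraph of $\overline{\Gamma}$. Taking $H = \Gamma_i$ in the duality above, the nonoccurrence of $\Gamma_i$ as an induced subgraph of $\overline{\Gamma}$ is equivalent to the nonoccurrence of $\overline{\Gamma_i}$ as an induced subgraph of $\Gamma$. Running this equivalence over $i = 1, \dots, 9$ shows that $\overline{\Gamma}$ is a line graph exactly when none of the graphs $\overline{\Gamma_1}, \dots, \overline{\Gamma_9}$ of Figure~\ref{figure 2} is an induced subgraph of $\Gamma$, which is the desired statement.

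The argument has essentially no computational content once Lemma~\ref{induced lemma} is granted; the only point requiring care is verifying that the nine forbidden configurations in Figure~\ref{figure 2} are precisely the complements $\overline{\Gamma_i}$ of the nine graphs in Figure~\ref{figure 1}, so that the two figures are genuinely dual to one another. I expect that bookkeeping --- matching each depicted graph in Figure~\ref{figure 2} with the complement of its counterpart in Figure~\ref{figure 1} --- to be the main (and essentially only) obstacle, and it is a matter of direct inspection rather than of proof.
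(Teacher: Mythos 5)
Your argument is correct and is exactly the standard derivation: the paper itself offers no proof of this lemma, citing it directly from \cite{line complement}, and the cited result is obtained precisely by applying Beineke's characterization (Lemma~\ref{induced lemma}) to $\overline{\Gamma}$ together with the elementary identity $\overline{\Gamma}[S]=\overline{\Gamma[S]}$ that you verify. The only point of care you flag --- that Figure~\ref{figure 2} genuinely depicts the complements of the graphs in Figure~\ref{figure 1} --- is the right one, and it is a matter of inspection.
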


	\section{Line graph characterization of $\Gamma(G)$} 
	
	All the finite groups $G$ such that $\Gamma(G)$ is a line graph of some graphs are classified in this section. Afterwards, we identify all the finite groups that have $\Gamma^*(G)$ and $\Gamma^{**}(G)$ as line graphs. Lastly, we characterize all the groups $G$ that have $\Gamma(G)$, $\Gamma^*(G)$, and $\Gamma^{**}(G)$ as the complement of the line graph of some graph. One can easily observe the following.
	
	\begin{lemma}
		If a graph is a complete graph, then it is the line graph and complement of line graph of some graph.
	\end{lemma}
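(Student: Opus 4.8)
The plan is to exhibit, for each complete graph $K_n$, explicit graphs realizing it as a line graph and as the complement of a line graph; no forbidden-subgraph bookkeeping is really needed, since in both cases the witness graph is transparent.

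First I would settle the line graph claim. I would take the star $K_{1,n}$, whose $n$ edges all meet at the central vertex. Because every pair of these edges is incident, the corresponding $n$ vertices of $L(K_{1,n})$ are pairwise adjacent, so $L(K_{1,n}) \cong K_n$. Hence any complete graph is the line graph of a star.

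Next I would address the complement claim. The complement $\overline{K_n}$ is the edgeless graph on $n$ vertices, so it suffices to realize this edgeless graph as a line graph. I would use $n K_2$, the disjoint union of $n$ independent edges: these edges are pairwise non-incident, so $L(n K_2)$ has $n$ vertices and no edges, giving $L(n K_2) \cong \overline{K_n}$. Taking complements yields $K_n \cong \overline{L(n K_2)}$, so $K_n$ is the complement of a line graph.

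As an alternative route, both parts follow at once from Lemma \ref{induced lemma} and Lemma \ref{complement induced lemma}: every induced subgraph of a complete graph is again complete, whereas each of the nine forbidden graphs (and likewise each complement $\overline{\Gamma_i}$) contains a non-adjacent pair and therefore cannot occur as an induced subgraph of $K_n$. There is essentially no obstacle here; the only point requiring a moment's care is the degenerate small cases $n = 1, 2$, and these are handled uniformly by the explicit witnesses $K_{1,n}$ and $n K_2$.
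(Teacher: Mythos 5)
Your proof is correct and is exactly the intended argument: the paper states this lemma without proof (``one can easily observe''), and your explicit witnesses $L(K_{1,n}) \cong K_n$ and $L(nK_2) \cong \overline{K_n}$ (hence $K_n \cong \overline{L(nK_2)}$) supply the observation cleanly, with the forbidden-subgraph route via Lemmas \ref{induced lemma} and \ref{complement induced lemma} as a valid alternative. No gaps.
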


\begin{lemma}
	The commuting graph $\Gamma(G)$ of a group $G$ is complete if and only if $G$ is abelian.
\end{lemma}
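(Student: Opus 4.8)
The plan is to unwind the two definitions directly, since the statement is an immediate consequence of how adjacency in $\Gamma(G)$ is defined against the definition of completeness. Recall that in $\Gamma(G)$ the vertex set is $G$, and two distinct vertices $x,y$ are joined by an edge precisely when $xy=yx$. A graph is complete exactly when every pair of distinct vertices is adjacent. So the proof reduces to translating ``every pair of distinct vertices is adjacent'' into the group-theoretic condition and comparing it with the definition of abelian.

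For the forward implication I would assume $G$ is abelian. Then $xy=yx$ holds for every pair $x,y\in G$, and in particular for every pair of \emph{distinct} elements; by the adjacency rule each such pair is an edge of $\Gamma(G)$, so $\Gamma(G)$ is complete. For the converse I would assume $\Gamma(G)$ is complete. Then any two distinct vertices $x,y\in G$ are adjacent, which by definition means $xy=yx$. Since every element commutes with itself, the relation $xy=yx$ then holds for all $x,y\in G$ (including the case $x=y$), and hence $G$ is abelian.

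There is no genuine obstacle here; the argument is a one-step unpacking of definitions in each direction. The only point requiring minor care is the treatment of the diagonal case $x=y$: completeness only asserts adjacency of \emph{distinct} vertices, so to conclude that $G$ is abelian one must separately note that commutativity of an element with itself is automatic. Once this is observed, the equivalence follows immediately.
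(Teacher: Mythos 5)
Your proof is correct: it is the direct definitional unpacking, and the paper itself states this lemma without proof as an easy observation, so there is nothing to compare beyond noting that your argument (including the careful remark about the diagonal case $x=y$) is exactly the intended one.
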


\begin{lemma}
	For the commuting graph $\Gamma(G)$, the dominating set $Dom(\Gamma)$ is the center $Z(G)$ of a group $G$.
\end{lemma}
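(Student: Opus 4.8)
The plan is to prove the set equality $\mathrm{Dom}(\Gamma(G)) = Z(G)$ by a direct chase of the two defining conditions, establishing the two inclusions separately. Recall from the preliminaries that a vertex $u$ is dominating precisely when it is adjacent to every other vertex of $\Gamma(G)$, and that, by the definition of the commuting graph, adjacency of $u$ and $v$ means $uv = vu$. Recall also that $Z(G) = \{u \in G : uv = vu \text{ for all } v \in G\}$. Since the vertex set of $\Gamma(G)$ is all of $G$, both objects live inside $G$, so it suffices to show that an element satisfies one condition if and only if it satisfies the other.

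For the inclusion $Z(G) \subseteq \mathrm{Dom}(\Gamma(G))$, I would take $u \in Z(G)$ and note that $uv = vu$ holds for every $v \in G$, in particular for every vertex $v \neq u$; hence $u \sim v$ for all such $v$, which is exactly the statement that $u$ is a dominating vertex. For the reverse inclusion $\mathrm{Dom}(\Gamma(G)) \subseteq Z(G)$, I would take a dominating vertex $u$, so that $uv = vu$ for every vertex $v$ with $v \neq u$; since the relation $uu = uu$ is trivially true, the commuting condition in fact holds for all $v \in G$, and therefore $u \in Z(G)$. Combining the two inclusions yields the claimed equality.

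The only point requiring any care is the bookkeeping of the universal quantifier: ``adjacent to every other vertex'' ranges over $v \neq u$, whereas membership in $Z(G)$ demands commutation with all $v \in G$, including $v = u$ itself. This gap is closed immediately by the trivial identity $uu = uu$, so there is no genuine obstacle; the result is essentially a restatement of the definitions of the center and of a dominating vertex once the commuting-graph adjacency relation is substituted in.
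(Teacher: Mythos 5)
Your proof is correct and is exactly the definitional argument the paper has in mind --- the paper states this lemma without proof as an easy observation, and your two-inclusion unwinding of ``dominating vertex'' and ``center,'' including the careful handling of the $v \neq u$ versus all $v \in G$ quantifier via $uu = uu$, is the intended justification.
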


\begin{lemma} \label{mantel lemma}
\cite{Mantel}	The maximum number of edges in an	$n$-vertex triangle-free graph is $\lfloor$ $\frac{n^2}{4} \rfloor.$
\end{lemma}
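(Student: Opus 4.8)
The plan is to establish the upper bound $m \le \lfloor n^2/4 \rfloor$ for the edge count $m$ of a triangle-free graph $\Gamma$ on $n$ vertices via a counting argument on vertex degrees, and then to verify that the bound is attained. The crucial consequence of the triangle-free hypothesis is that the two endpoints of any edge have no common neighbour: if an edge $\{u,v\} \in E(\Gamma)$ admitted a common neighbour $w$, then $u,v,w$ would span a triangle. Hence, for every edge $\{u,v\}$ the neighbourhoods of $u$ and $v$ are disjoint, which forces $d(u) + d(v) \le n$, where $d$ denotes the degree.

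First I would sum the inequality $d(u)+d(v)\le n$ over all $m$ edges. Each vertex $v$ lies in exactly $d(v)$ edges and contributes $d(v)$ to each corresponding summand, so the total rearranges to $\sum_{v \in V(\Gamma)} d(v)^2 \le mn$. Next I would bound the left side from below by the Cauchy--Schwarz inequality, $\sum_v d(v)^2 \ge \frac{1}{n}\bigl(\sum_v d(v)\bigr)^2$, and invoke the handshake identity $\sum_v d(v) = 2m$ to rewrite the right side as $4m^2/n$. Chaining the two estimates gives $4m^2/n \le mn$; dividing by $m$ (the case $m=0$ being trivial) yields $m \le n^2/4$, and integrality of $m$ upgrades this to $m \le \lfloor n^2/4 \rfloor$.

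To confirm the bound cannot be improved, I would exhibit the complete bipartite graph $K_{\lfloor n/2\rfloor,\,\lceil n/2\rceil}$, which is bipartite and therefore triangle-free, and count its edges as $\lfloor n/2\rfloor \cdot \lceil n/2\rceil = \lfloor n^2/4\rfloor$. There is no serious obstacle in this argument; the only points demanding care are the correct normalization in the Cauchy--Schwarz step and the separate treatment of the degenerate case $m=0$ so that the final division is legitimate.
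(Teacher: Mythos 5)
Your argument is correct: the disjoint-neighbourhood observation gives $d(u)+d(v)\le n$ for every edge, summing over edges yields $\sum_v d(v)^2\le mn$, Cauchy--Schwarz with the handshake identity gives $4m^2/n\le mn$, hence $m\le \lfloor n^2/4\rfloor$, and $K_{\lfloor n/2\rfloor,\lceil n/2\rceil}$ shows sharpness. The paper itself offers no proof of this lemma --- it is Mantel's theorem, imported by citation --- so there is nothing to compare against; your write-up is simply the standard degree-counting proof of that classical result, and it is sound, including the care taken with the $m=0$ case before dividing.
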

	
	\begin{theorem}\label{theorem 3.1}
		The commuting graph $\Gamma(G)$ is the line graph of some graph if and only if $G$ is abelian.
	\end{theorem}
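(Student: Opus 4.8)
The plan is to prove both implications, using Lemma~\ref{induced lemma} (Beineke's forbidden-subgraph characterization) for the nontrivial direction. The reverse implication is immediate: if $G$ is abelian, then by the earlier lemma asserting that $\Gamma(G)$ is complete precisely when $G$ is abelian, $\Gamma(G)$ is a complete graph, and by the lemma that every complete graph is a line graph, $\Gamma(G)$ is a line graph. So all the work lies in the forward direction, which I would prove by contraposition: assuming $G$ is non-abelian, I would exhibit an induced $K_{1,3}$ (the claw), which is the first of the nine forbidden subgraphs in Figure~\ref{figure 1}, and conclude via Lemma~\ref{induced lemma} that $\Gamma(G)$ is not a line graph.

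The heart of the argument is producing three pairwise non-commuting elements. Since $G$ is non-abelian, choose $a,b \in G$ with $ab \neq ba$. The key observation is that $a$, $b$, and $ab$ are pairwise non-commuting. Indeed, $a(ab) = a^2 b$ while $(ab)a = aba$, so $a$ commutes with $ab$ if and only if $ab = ba$; similarly $(ab)b = ab^2$ while $b(ab) = bab$, so $b$ commutes with $ab$ if and only if $ab = ba$. As $ab \neq ba$, none of the three pairs commutes. A quick check also shows these three elements are distinct and distinct from $e$: for instance $a = ab$ would force $b = e$, contradicting $ab \neq ba$, and likewise for the other coincidences.

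With this triple in hand, I would consider the induced subgraph on $\{e, a, b, ab\}$. The identity $e$ is a dominating vertex of $\Gamma(G)$ (it commutes with every element), so $e \sim a$, $e \sim b$, and $e \sim ab$, whereas $a$, $b$, $ab$ are pairwise non-adjacent by the computation above. This is exactly the claw $K_{1,3}$, giving the desired forbidden induced subgraph and completing the contrapositive. I do not expect any genuine obstacle here; the only point requiring care is the verification that $\{a, b, ab\}$ furnishes three pairwise non-commuting (and pairwise distinct) vertices, after which the claw and the appeal to Lemma~\ref{induced lemma} are immediate.
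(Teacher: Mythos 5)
Your proposal is correct, and it takes a genuinely different and substantially more elementary route than the paper. The paper first eliminates $|Z(G)|\geq 3$ and $|Z(G)|=2$ via the forbidden graphs $\Gamma_3$ and $\Gamma_6$, and then, for trivial center, invokes the commuting-probability formula $P_2(G)$ together with Mantel's theorem (Lemma~\ref{mantel lemma}) to show the complement of $\Gamma(G)$ contains a triangle, i.e.\ three pairwise non-commuting elements, which combined with $e$ give the claw $\Gamma_1$. You reach the same claw directly: if $ab\neq ba$, then $a(ab)=(ab)a$ would force $ab=ba$ by left cancellation, and $b(ab)=(ab)b$ would force it by right cancellation, so $a$, $b$, $ab$ are pairwise non-commuting (and pairwise distinct, and distinct from $e$), whence $\{e,a,b,ab\}$ induces $K_{1,3}$. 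Your argument buys brevity and robustness — it needs no case analysis on $Z(G)$, no counting of conjugacy classes, and no extremal graph theory, and it sidesteps the somewhat delicate verification of inequality~(\ref{eq. 1}) on which the paper's trivial-center case rests. What the paper's longer route buys is reusable intermediate information (the bounds on $|Z(G)|$ and the probability argument are recycled in Theorems~\ref{theorem 3.2} and beyond), but for this theorem alone your proof is complete and preferable.
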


\begin{proof}
If $G$ is an abelian group then $\Gamma(G)$ is the complete graph. Hence it is a line graph of some graph. Now, we show that no non-abelian group can be a line graph.
Let $G$ be a non-abelian group. \\
 Let $\mid Z(G) \mid \geq 3.$ Let three distinct elements in the center be $e,x$ and $y$. Since $G$ is a non-abelian group, there exist elements $a$ and $b$ such that $a \nsim b$. Then the set $\{e,x,y,a,b\}$ will make the structure of $\Gamma_3$ in Figure \ref{figure 1}. Therefore, $\mid Z(G) \mid \leq 2.$
Now, suppose that $Z(G)=\{e,x\}.$  Note that there exist $a$ and $b$ such that $a \nsim b$. Note that $a \sim ax$ and  $b \sim bx$.  The set $\{e,x,a, ax,b, bx\}$ will make the structure of $\Gamma_6$ in Figure \ref{figure 1}. Therefore, the group $G$ has the trivial center.\\ Note that the probability of any two elements in $G$ to commute is $$P_2(G)=\frac{\text{Nubmer of conjugacy classes in $G$}}{\text{Total number of elements in $G$}} (\mbox{see \cite{probability}}).$$
One can easily note that if a group $G$ has trivial center, then $P_2(G) \leq \frac{1}{2}.$ 
Let $P_2(G)=\frac{1}{2}$. Then the maximum of half pairs of elements can commute. There is total $^n C_2$ pairs of $n$ elements in which $\frac{n(n-1)}{4}$ pairs can commute where $n= \mid G \mid$. This implies that there exist  $\frac{n(n-1)}{4}$ pairs that do not commute.\\
Let $\overline{\Gamma(G)}$ be the complement of the commuting graph $\Gamma(G)$. By Lemma \ref{mantel lemma}, the maximum number of edges in $(n-1)$ non-central vertex triangle free graph in  $\overline{\Gamma(G)}$ is $\lfloor$ $\frac{(n-1)^2}{4} \rfloor.$ Note that there are $\frac{n(n-1)}{4}$ pairs which do not commute and  
\begin{equation} \label{eq. 1}
	\frac{n(n-1)}{4} >  \lfloor \frac{(n-1)^2}{4} \rfloor.
\end{equation}
Hence, there always exists three distinct elements $x,y$ and $z$ such that $x$ does not commte with $y$, $y$ does not commte with $z$ and $z$ does not commte with $x$. As, the identity element always commute with all other elements, so the set $\{e,x,y,z\}$ will make $\Gamma_1$ in Figure \ref{figure 1}.\\
If $P_2(G) < \frac{1}{2}$, then there are more choices of pairs which do not commute. One can easily check that the inequality (\ref{eq. 1}) is satisfied in this case. Therefore, we get an induced subgraph $\Gamma_1$ of Figure \ref{figure 1} in this case. Therefore, there does not exist a non-abelian group for that the commuting graph $\Gamma(G)$  is the line graph of some graph. 
 \end{proof}

\begin{theorem} \label{theorem 3.2}
		The commuting graph $\Gamma^*(G)$ is the line graph of some graph if and only if one of the condition holds:
		\begin{enumerate}
			\item The group $G$ is abelian.
			\item The group $G$ is a non-abelian group with trivial center and  the centralizer of any non-central element is abelian.
		\end{enumerate}
\end{theorem}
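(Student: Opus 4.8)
The plan is to apply the forbidden-subgraph criterion of Lemma \ref{induced lemma}, noting that the graph $\Gamma_1$ of Figure \ref{figure 1} is the claw $K_{1,3}$ (exactly as it arose in the proof of Theorem \ref{theorem 3.1}, but there with the identity serving as its centre). The backward implications are quick. If $G$ is abelian then $\Gamma(G)$, hence its induced subgraph $\Gamma^*(G)$, is complete and so a line graph. If instead $G$ satisfies condition (2), then since $Z(G)=\{e\}$ the vertex set of $\Gamma^*(G)$ is precisely the set of non-central elements, and I claim $\Gamma^*(G)$ is a disjoint union of cliques. Indeed, commuting is then transitive on $G\setminus\{e\}$: if $x\sim y$ and $y\sim z$ with $x,y,z\ne e$, then $x,z\in C_G(y)$, which is abelian by hypothesis, forcing $x\sim z$. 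Being reflexive and symmetric as well, commuting is an equivalence relation whose classes are the connected components of $\Gamma^*(G)$, each a clique. A disjoint union of complete graphs is a line graph (each $K_n=L(K_{1,n})$, and the nine forbidden graphs are connected, so none can straddle two components), completing this direction.

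For the forward implication, suppose $G$ is non-abelian and $\Gamma^*(G)$ is a line graph. The load-bearing observation, which drives everything, is that a non-abelian group always contains three pairwise non-commuting non-identity elements: choosing $a,b$ with $ab\ne ba$, the elements $a$, $b$, $ab$ are distinct, non-identity, and pairwise non-commuting (for instance $a(ab)=a^2b\ne aba=(ab)a$ since $ab\ne ba$, and similarly for the remaining pairs). I would first use this to show $Z(G)=\{e\}$. If some $z\ne e$ lay in the centre, then $z$ is adjacent to every other vertex of $\Gamma^*(G)$; taking $a,b,ab$ as above (all non-central, hence distinct from $z$), the set $\{z,a,b,ab\}$ induces a claw $\Gamma_1$, contradicting Lemma \ref{induced lemma}. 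Hence the centre is trivial.

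Next I would show every centralizer of a non-identity element is abelian. Suppose not, so some $x\ne e$ has $C_G(x)$ non-abelian. Applying the same observation inside the group $C_G(x)$ produces $u,v\in C_G(x)$ with $u$, $v$, $uv$ pairwise non-commuting; all three lie in $C_G(x)$ and so commute with $x$, while none equals $x$ (each of $u,v,uv$ fails to commute with one of the others, whereas $x$ commutes with all of them). Then $\{x,u,v,uv\}$ again induces a claw $\Gamma_1$, a contradiction. Thus $C_G(x)$ is abelian for every non-central $x$, which together with the previous paragraph is exactly condition (2).

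The genuinely routine parts are the two backward implications and the elementary commutator computations verifying that $a,b,ab$ are pairwise non-commuting. The one essential idea — which I would isolate as a short lemma — is the existence of this triple, since a single central (respectively centralizing) vertex together with such a triple immediately yields the forbidden claw. I do not anticipate any serious obstacle beyond checking, in each claw, that the proposed centre is distinct from and adjacent to the three mutually non-adjacent vertices, which the non-centrality of $a,b,ab$ guarantees.
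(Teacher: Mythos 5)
Your proof is correct, but it takes a genuinely different and more elementary route than the paper. The paper establishes the existence of three pairwise non-commuting elements indirectly: it first rules out $|Z(G)|\geq 3$ with a $\Gamma_3$-configuration, then handles $|Z(G)|=2$ by invoking the commuting probability $P_2(G)$ (the cases $P_2(G)>\frac{1}{2}$ reduce to $D_4$ and $Q_8$, checked by hand) together with Mantel's bound on triangle-free graphs to force a triangle in the complement of $\Gamma(G)$; the non-abelian-centralizer step is then dispatched as ``a similar argument.'' You instead exhibit the triple explicitly --- $a$, $b$, $ab$ with $ab\neq ba$ are distinct, non-identity, and pairwise non-commuting --- which collapses the entire case analysis on $|Z(G)|$ into one claw $\{z,a,b,ab\}$ and, crucially, transfers verbatim inside a non-abelian centralizer $C_G(x)$ to give the claw $\{x,u,v,uv\}$ (your check that $x\notin\{u,v,uv\}$, since $x$ is central in $C_G(x)$ while $u,v,uv$ are not, is exactly the point the paper leaves implicit). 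Your backward direction for condition (2) is also more explicit than the paper's one-line assertion: with trivial center and abelian centralizers, commuting is transitive on non-identity elements, so $\Gamma^*(G)$ is a disjoint union of cliques, and since Beineke's nine forbidden graphs are connected and non-complete none can occur as an induced subgraph. The net effect is a shorter, self-contained proof that avoids the commuting-probability and Mantel machinery entirely, at the cost only of not connecting the result to that literature.
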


\begin{proof}
	If $G$ is abelian group, then $\Gamma^*(G)$ is the complete graph. Hence it is a line graph of some graph.
	Now, let $G$ be a non-abelian group.\\
	By the similar argument as in the proof of Theorem \ref{theorem 3.1}, $\mid Z(G) \mid \leq 2$. Now, suppose $Z(G)=\{e,x\}$. Since the conjugacy class of each non-central element contains more than one element, one can observe that $P_2(G) > \frac{1}{2}$ if and only if $G$ is either isomorprhic to $D_4$ or  $Q_8$, but $\Gamma^*(D_4)$ and $\Gamma^*(Q_8)$ are not line graphs. If $P_2(G) \leq \frac{1}{2}$, then by the similar argument as in Theorem \ref{theorem 3.1}, we get three distinct element $y,z$ and $w$ such that $ y\nsim z, z\nsim w$ and $w \nsim y$. Now the set $\{x,y,z,w\}$ will make $\Gamma_1$ of Figure \ref{figure 1}. Therefore, the group $G$ has the trivial center.\\
	Let $x \in G$ be a non-central element. Suppose that the centralizer $C_G(x)=\{e,x,y_1, \hdots, y_{m-2}\}$ of $x$ in $G$ is non-abelian. By the similar argument as in Theorem \ref{theorem 3.1}, we can suppose $\mid Z(C_G(x))\mid =2.$ By the similar argument for $C_G(x)$ as above, one can show that $\Gamma^*(G)$ is not a line graph. This implies that the centralizer of each non-central element is abelian. One can easily observe that $\Gamma^*(G)$ is a line graph in this case. 
\end{proof}

\begin{example}
	The commuting graph $\Gamma^*(D_n)$ is the line graph for dihedral groups $D_n$ when $n$ is odd.
\end{example}

If $G$ is abelian group, then the center consists all the elements of group. In this case there is no vertex left for $\Gamma^{**}(G)$. So, we will find when the commuting graph $\Gamma^{**}(G)$ is the line graph of some graph for non-abelian groups.

\begin{theorem}
		Let $G$ be a non-abelian group. Then the commuting graph $\Gamma^{**}(G)$ is the line graph of some graph if and only if centralizer of any non-central element is abelian.
\end{theorem}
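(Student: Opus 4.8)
The plan is to use the forbidden-subgraph characterization of line graphs in Lemma \ref{induced lemma}, where the decisive obstruction will again be the claw $\Gamma_1 = K_{1,3}$, just as in the proofs of Theorems \ref{theorem 3.1} and \ref{theorem 3.2}. First I would record that, since $\mathrm{Dom}(\Gamma(G)) = Z(G)$, the vertex set of $\Gamma^{**}(G)$ is precisely the set $G \setminus Z(G)$ of non-central elements; two such vertices are adjacent exactly when they commute. The proof then splits into the two implications of the biconditional, and the whole point is a clean structural dichotomy: a non-abelian centralizer instantly manufactures an induced claw, while abelian centralizers force the commuting relation to be transitive.

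For necessity I would argue by contrapositive. Suppose some non-central $x$ has non-abelian centralizer $H = C_G(x)$, and pick $a, b \in H$ with $ab \neq ba$. The key elementary observation is that $a$, $b$, $ab$ are pairwise non-commuting: from $a\cdot ab = a^2 b$ and $ab \cdot a = aba$ one gets $a \nsim ab$ unless $ab = ba$, and from $b \cdot ab = bab$ and $ab \cdot b = ab^2$ one gets $b \nsim ab$ unless $ba = ab$. These three elements are distinct and, being non-commuting, none lies in $Z(G)$, so all are vertices of $\Gamma^{**}(G)$; each lies in $H = C_G(x)$, hence is adjacent to $x$, and $x \notin \{a,b,ab\}$ because $x \in Z(H)$ whereas $a,b,ab \notin Z(H)$. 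Thus $\{x, a, b, ab\}$ induces a copy of $\Gamma_1$, and by Lemma \ref{induced lemma} the graph $\Gamma^{**}(G)$ is not a line graph.

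For sufficiency, assume the centralizer of every non-central element is abelian. I would first show that commuting is an equivalence relation on $G \setminus Z(G)$: reflexivity and symmetry are immediate, and for transitivity, if $u \sim v$ and $v \sim w$ with $u,v,w$ non-central, then $u, w \in C_G(v)$, which is abelian, so $u \sim w$. Hence $\Gamma^{**}(G)$ is a disjoint union of complete graphs, its components being the equivalence classes. Finally, a disjoint union of cliques is a line graph, since $K_n = L(K_{1,n})$ and the line-graph operation respects disjoint unions, so $\Gamma^{**}(G)$ is the line graph of a disjoint union of stars; equivalently, a cluster graph is $P_3$-free, and each of the nine graphs of Figure \ref{figure 1} is connected and non-complete and therefore contains an induced $P_3$, so none of them can appear as an induced subgraph.

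The routine verifications (distinctness and non-centrality of $a,b,ab$, and the persistence of the equivalence-relation argument after deleting the central vertices) are straightforward. I expect the only step needing genuine care to be the sufficiency direction — specifically, justifying cleanly that a disjoint union of cliques avoids all nine Beineke obstructions, which is most transparently done by exhibiting it directly as the line graph of a disjoint union of stars rather than checking the forbidden list case by case.
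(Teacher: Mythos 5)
Your proof is correct. The sufficiency half coincides with the paper's: abelian centralizers make commuting a transitive relation on $G \setminus Z(G)$, so $\Gamma^{**}(G)$ is a disjoint union of cliques, which is the line graph of a disjoint union of stars (or, as you note, is $P_3$-free and hence avoids all nine Beineke obstructions). The necessity half, however, takes a genuinely different and considerably more direct route. The paper disposes of a non-abelian centralizer by invoking ``the similar argument as in Theorem \ref{theorem 3.2}'', which in turn rests on the reduction $|Z(C_G(x))| \leq 2$ and on the commuting-probability and Mantel estimates of Theorem \ref{theorem 3.1} applied inside $C_G(x)$; that chain of back-references is heavy, involves counting, and leaves several cases implicit. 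You instead exhibit one explicit induced claw: for non-commuting $a, b \in C_G(x)$ the elements $a$, $b$, $ab$ are pairwise non-commuting, hence distinct, non-central, and distinct from $x$ (since $x \in Z(C_G(x))$ while $a, b, ab \notin Z(C_G(x))$), and all three lie in $C_G(x)$, so $\{x, a, b, ab\}$ induces $\Gamma_1$ of Figure \ref{figure 1} inside $\Gamma^{**}(G)$. This is self-contained, requires no case analysis on $|Z(C_G(x))|$ and no appeal to Lemma \ref{mantel lemma}, and would in fact also streamline the corresponding steps in the proofs of Theorems \ref{theorem 3.1} and \ref{theorem 3.2}. The verifications you flag as routine (distinctness, non-centrality, and survival of all four vertices after deleting $Z(G)$) do all go through.
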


\begin{proof}
 If the centralizer of each element is abelian, then $G$ is partitioned in commuting classes. One can easily check that $\Gamma^{**}(G)$ is a line graph in this case.\\
If the centralizer of an element in a group $G$ is non-abelian, then by the similar argument as in Theorem \ref{theorem 3.2}, one can show that  $\Gamma^{**}(G)$ is not a line graph.
\end{proof}

\begin{example}
	The commuting graph $\Gamma^{**}(D_n)$ is the line graph for dihedral groups $D_n$.
\end{example}

\begin{example}
	The commuting graph $\Gamma^{**}(Q_8)$ is the line graph for quaternion groups $Q_8$.
\end{example} 

\begin{theorem}
		Let $\Delta(G) \in \{ \Gamma(G), \Gamma^*(G), \Gamma^{**}(G) \}.$ Then $\Delta(G)$ is the complement of the line graph of some graph if and only if one of the condition holds:
		\begin{enumerate}
			\item The group $G$ is abelian.
			\item If $G$ is a non-abelian group, then $G \cong D_4$ or $G \cong Q_8$.
		\end{enumerate}
\end{theorem}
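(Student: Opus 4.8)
The plan is to reduce all three choices of $\Delta(G)$ to one question and then apply Lemma \ref{complement induced lemma}. The key observation is that a central element commutes with everything, hence is isolated in every complement $\overline{\Delta(G)}$; since $\mathrm{Dom}(\Gamma(G))=Z(G)$, the graphs $\overline{\Gamma(G)}$, $\overline{\Gamma^*(G)}$ and $\overline{\Gamma^{**}(G)}$ differ only by adding or deleting isolated vertices, all three agreeing on the non-commuting graph $\nabla(G)$ carried by the non-central elements. Because $L(H\cup K_2)=L(H)\cup K_1$, adjoining or removing isolated vertices never changes whether a graph is a line graph; so for a fixed $G$ the three graphs $\Gamma(G),\Gamma^*(G),\Gamma^{**}(G)$ are complements of line graphs simultaneously, and it suffices to decide when $\nabla(G)$ is a line graph, equivalently when $\Gamma^{**}(G)$ contains no induced $\overline{\Gamma_i}$ of Figure \ref{figure 2}.

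For the ``if'' direction, if $G$ is abelian then $\Gamma(G),\Gamma^*(G)$ are complete and $\Gamma^{**}(G)$ is empty, so all are complements of line graphs. If $G\cong D_4$ or $G\cong Q_8$, then $|Z(G)|=2$ and the six non-central elements fall into three mutually non-commuting pairs, each generating with $Z(G)$ an abelian group of order $4$; thus $\Gamma^{**}(G)=3K_2$ and $\nabla(G)=\overline{3K_2}=K_{2,2,2}=L(K_4)$, while $\overline{\Gamma(G)}$ and $\overline{\Gamma^*(G)}$ are $K_{2,2,2}$ plus isolated vertices, again line graphs.

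For the ``only if'' direction I would first show every non-central centralizer is abelian. If $C_G(x)$ is non-abelian, choose non-commuting $u,v\in C_G(x)$; one of $xu,xv$ is non-central (otherwise $u,v$ lie in one coset of $Z(G)$ and commute), say $xu$, and then $x,u,xu$ are three distinct pairwise-commuting non-central elements. As in the proofs of Theorems \ref{theorem 3.1}--\ref{theorem 3.2}, such a commuting triangle together with the remaining non-central elements of $C_G(x)$ produces an induced $\overline{\Gamma_1}$ or $\overline{\Gamma_3}$ in $\Gamma^{**}(G)$, a contradiction. Hence commuting is an equivalence relation on the non-central elements, so $\Gamma^{**}(G)=\bigsqcup_{i=1}^{r}K_{n_i}$ (the classes being the non-central parts $A_i\setminus Z(G)$ of the maximal abelian subgroups) and $\nabla(G)=K_{n_1,\dots,n_r}$ with $r\ge 2$. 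Now $\bigsqcup_i K_{n_i}$ has an induced $\overline{\Gamma_1}=K_3\cup K_1$ exactly when some $n_i\ge 3$ (with $r\ge 2$), and an induced $\overline{\Gamma_3}=K_2\cup 3K_1$ exactly when some $n_i\ge 2$ with $r\ge 4$; so these forbidden subgraphs force $n_i\le 2$ for all $i$ and $r\le 3$. This leaves the finitely many partitions $(2,2,2),(2,2,1),(2,1,1),(2,2),(2,1)$. Since a class of size $n_i$ satisfies $|A_i|=|Z(G)|+n_i$ with $n_i=|Z(G)|\big([A_i:Z(G)]-1\big)$, a size-$1$ class forces $|Z(G)|=1$; short arithmetic on $|Z(G)|$ and $|G|=|Z(G)|+\sum_i n_i$ then eliminates every case except $(2,2,2)$ (the others require a non-existent non-abelian order, e.g.\ an abelian order $5,6,7$ or a non-abelian order $\le 6$ with $|Z(G)|\ge 2$), while $(2,2,2)$ forces $|Z(G)|=2$ and $|G|=8$, giving exactly $D_4$ and $Q_8$.

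I expect the main obstacle to be the promotion of the commuting triangle to an actual forbidden induced subgraph: on four vertices a triangle with a pendant (the ``paw'') and a triangle with an extra edge (the ``diamond'') are both complements of line graphs, so one cannot stop at the triangle but must genuinely feed in further non-central elements of the non-abelian centralizer, precisely as in the earlier line-graph theorems. The second delicate point is the realizability step, where the short combinatorial list of admissible multipartite graphs is cut down to $D_4$ and $Q_8$ by arithmetic on group orders rather than by graph theory.
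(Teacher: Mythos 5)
Your overall architecture is sound and in places cleaner than the paper's: the reduction of all three graphs to the non-commuting graph on non-central vertices (via isolated vertices in the complement) is correct and not made explicit in the paper, the computation $\overline{3K_2}=K_{2,2,2}=L(K_4)$ handles the ``if'' direction for $D_4$ and $Q_8$, and your endgame --- disjoint cliques, the bounds $n_i\le 2$ and $r\le 3$ from $\overline{\Gamma_1}=K_3\cup K_1$ and $\overline{\Gamma_3}=K_2\cup 3K_1$, then arithmetic on $|Z(G)|$ and $|G|$ --- is a legitimate alternative to the paper's route, which instead bounds the prime divisors of $|G|$ in the trivial-center case and invokes the classification of $(2,3)$-generated groups with small element orders to reduce to $S_3$, $A_4$, $S_4$.

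However, there is a genuine gap exactly where you predicted one: the claim that every non-central centralizer must be abelian. Your configuration does not close. You build the commuting triangle $\{x,u,xu\}$ from non-commuting $u,v\in C_G(x)$, but the only element you have in hand that fails to commute with $u$ is $v$, and $v$ \emph{does} commute with $x$ (it lies in $C_G(x)$), so $\{x,u,xu,v\}$ induces a paw, not $K_3\cup K_1$; as you note yourself, the paw is not forbidden. Saying ``as in Theorems \ref{theorem 3.1}--\ref{theorem 3.2}'' does not repair this, since those proofs produce the graphs $\Gamma_i$ forbidden for line graphs, not the complements $\overline{\Gamma_i}$ needed here. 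The fix is to build the triangle out of elements that $v$ is guaranteed \emph{not} to commute with, i.e.\ translates of $u$ by elements centralizing both $u$ and $v$: the paper does this in the $|Z(G)|=2$ case with $\{c,cz,ca,d\}$ ($z$ the central involution, $c\nsim d$ in $C_G(a)$), and in general one can take $\{u,xu,x^2u,v\}$ when $o(x)\ge 3$; but when $Z(G)$ is trivial and $o(x)=2$ a further argument is required, and neither you nor (for that matter) the paper supplies it --- the paper avoids the issue entirely by switching to the element-order/$(2,3)$-generation argument for trivial center. Until this lemma is actually proved, your reduction to disjoint unions of cliques, and hence the whole ``only if'' direction, is unsupported.
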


\begin{proof}
		If $G$ is abelian group, then $\Delta(G)$ is the complete graph. Hence it is the complement of a line graph of some graph. Now, let $G$ be a non-abelian group. \\ 
	    Let $\mid Z(G)\mid > 2.$ Then there exists two elements in center besides the identity. Let $x$ and $y$ to be two distinct elements of center other than the identity. As $G$ is non-abelian group, there exists $a$ and $b$ in $G$ such that $ a \nsim b$. Now, $a, ax$ and $ay$ commute with each other. But $b$ does not commute with all three of them. So the set $\{a,ax,ay,b\}$ will make $\overline{\Gamma_1}$ of Figure \ref{figure 2}. Hence, for the complement of line graph, $\mid Z(G) \mid \leq 2.$\\
	    Let $Z(G)=\{e,x\}$. First, suppose that the centralizer of every non-central element $a$ as $\{e,x,a,ax\}.$ Then the set of non-central elements is partitioned into the commuting classes consists of only two elements. If the number of commuting classes of non-central elements is greater than three, then we get $\overline{\Gamma_3}$ of Figure \ref{figure 2}. This implies that there are three or less commuting classes. The only possibility for such group are either $D_4$ or $Q_8$. One can easily check that $\Delta(D_4)$ and $\Delta(Q_8)$ are complement of a line graph.\\
	    Now, suppose that the centralizer $C_G(a)$ of every non-central element $a \in G$ is abelian and some centralizer $C_G(a)$ consists of more than four elements. This implies that $C_G(a)\backslash Z(G)$ consists of atleast three distinct elements $a_1, a_2$ and $a_3$. Since, $G$ is non-abelian, there exists $y$ in $G\backslash C_G(a)$ which does not commute with any of $a_1, a_2$ and $a_3$. Therefore, we get an induced subgraph $\overline{\Gamma_1}$ of Figure \ref{figure 2}. \\
	    Now, suppose that the centralizer of $C_G(a)$ of a non-central element $a \in G$ is non-abelian. Then the centralizer $C_G(a)$ consists of atleast two elements $c$ and $d$ such that $ c \nsim d.$ Note that $c \sim cx, cx \sim ca, c \sim ca, c \nsim d, d \nsim cx$ and  $ d \nsim ca$. Therefore, we get an induced subgraph $\overline{\Gamma_1}$ of Figure \ref{figure 2}. Hence, $\mid Z(G) \mid =1.$\\
	    Now, suppose an odd prime $p \geq 5$ divide the order $\mid G \mid $ of $G$. Then there exists an element $x \in G$ such that $ o(x)=p$. Therefore, we get a cyclic subgroup 	    $$H= \langle x \rangle= \{e,x,x^2, \hdots x^{p-1}\} \cong \mathbb{Z}_p.$$ 
	    Note that $C_G(x) \subseteq C_G(x^2) \subseteq C_G(x^4).$ Since $G$ is non-abelian, we get an element $y \in G \backslash C_G(x^4)$ such that $y \nsim x, y \nsim x^2, y \nsim x^4$. Therefore, we get an induced subgraph $\overline{\Gamma_1}$ of Figure \ref{figure 2}. This implies that only divisors of the order $\mid G \mid $ of $G$ are $2$ or $3$ such that the order of each element is less than five. Therefore, $\mid G \mid= 2^\alpha 3^\beta, \alpha, \beta \geq 1.$ Then, there exists a subgroup $H$ of $G$ generated by an involution and an element of order $3$. Hence,  by \cite{2-3 generated}, $G$ is isomorphic to either $A_4, S_3$ or $S_4$. One can easily check that $\Delta(A_4), \Delta(S_3)$ and $\Delta(S_4)$ are not complement of a line graph.		
\end{proof}
	
	\section*{Declarations}
	
	\noindent \textbf{Acknowledgement}: The first author is supported by junior research fellowship of CSIR, India.\\
	
	\noindent\textbf{Conflicts of interest}: There is no conflict of interest regarding the publishing of this paper.
	
	\small
	

\begin{thebibliography}{99}
			\bibitem[1]{cjo} Chartrand, G., Eroh, L., Johnson, M. A., \& Oellermann, O. R. (2000). Resolvability in graphs and the metric dimension of a graph. Discrete Applied Mathematics, 105(1-3), 99-113.
		
		\bibitem[2]{sat} Sebő, A., \& Tannier, E. (2004). On metric generators of graphs. Mathematics of Operations Research, 29(2), 383-393.
		
		\bibitem[3]{brf} Brauer, R., \& Fowler, K. A. (1955). On groups of even order. Annals of Mathematics, 62(3), 565-583.
		
		\bibitem[4]{bd} Bundy, D. (2006). The connectivity of commuting graphs. Journal of Combinatorial Theory, Series A, 113(6), 995-1007.
		
		\bibitem[5]{fms1} Ali, F., Salman, M., \& Huang, S. (2016). On the commuting graph of dihedral group. Communications in Algebra, 44(6), 2389-2401.
		
		\bibitem[6]{mmpp} Mirzargar, M., Pach, P. P., \& Ashrafi, A. R. (2014). Remarks on commuting graph of a finite group. Electronic Notes in Discrete Mathematics, 45, 103-106.
		
		\bibitem[7]{sml}Malviy, S., \& Kakkar, V. (2024). Commuting graph of non-abelian groups of order p 4 with center having p elements. Discrete Mathematics, Algorithms and Applications, 2450080.
		
		\bibitem[8]{solvable} Carleton, R., \& Lewis, M. L. (2024). The commuting graph of a solvable A-group. Journal of Group Theory, (0).
		
		\bibitem[9]{Ashrafi} Torktaz, M., \& Ashrafi, A. R. (2023). Commuting graph of CA- groups. Proyecciones (Antofagasta), 42(1), 1-17.
		
		\bibitem[10]{bera} Bera, S. (2022). Line graph characterization of power graphs of finite nilpotent groups. Communications in Algebra, 50(11), 4652-4668.
		
		\bibitem[11]{jk and dalal} Parveen, \& Kumar, J. (2024). On Finite Groups Whose Power Graphs are Line Graphs. Journal of Algebra and its Applications.
		
		\bibitem[12]{manisha} Manisha, M., Parveen, P., \& Kumar, J. (2024). Line graph characterization of the order supergraph of a finite group. Communications in Combinatorics and Optimization.
		
		\bibitem[13]{line} Beineke, L. W. (1970). Characterizations of derived graphs. Journal of Combinatorial theory, 9(2), 129-135.
		
		\bibitem[14]{line complement} Barati, Z. (2021). Line zero divisor graphs. Journal of Algebra and Its Applications, 20(09), 2150154.
		
		\bibitem[15]{Mantel} Mantel W. (1907). Problem 28, soln. by H. Gouventak, W. Mantel, J. Teixeira de Mattes, F. Schuh and W.A. Wythoff. Wiskundige Opgaven 10, 60–61.
		
		\bibitem[16]{probability} Clifton, C. (2010). Commutativity in non-abelian groups. Senior Project Report, Whitman College.
		
		\bibitem[17]{2-3 generated} Yang, N., \& Mamontov, A. S. (2021). (2, 3)-Generated Groups with Small Element Orders. Algebra and Logic, 60(3), 217-222.
		
	\end{thebibliography}
\end{document}